\documentclass[a4paper,10pt]{scrartcl}
\usepackage[utf8]{inputenc}
\usepackage{amssymb,amsmath,amsthm}
\usepackage{array, hhline}
\usepackage{tikz}
\usepackage{authblk}

%\documentclass[11pt, a4paper]{elsarticle}%\documentclass[11pt,a4paper]{article}
%\documentclass[preprint,12pt]{elsarticle}
%\documentclass[final,1p,times]{elsarticle}
%\documentclass{svjour3}
% \smartqed  % flush right qed marks, e.g. at end of proof
%
\usepackage[]{times}
\usepackage{fullpage}
\date{}
\usepackage{graphicx, epsfig}
\sloppy
%\usepackage[]{times}
%\usepackage{fullpage}
%\addtolength{\textheight}{1cm}

%\usepackage{subfigure}
\usepackage{amsmath}
%\date{}%leave empty
%\usepackage{latexsym}
%\usepackage{mathptmx}

% The amssymb package provides various useful mathematical symbols
\usepackage{amssymb}

\usepackage{tabularx, booktabs}
\usepackage{threeparttable}

\newcommand{\ds}{\displaystyle}
\newcommand{\ra}{\rangle}
\newcommand{\la}{\langle}

\def\l2{L^2(\Omega)}

\def\ha1{H^1(\Omega)}
\def\h10{H^1_0(\Omega)}

%\define\ker
%sonstige

\def\la{\langle}
\def\ra{\rangle}

\def\ez{{\bf e_z}}
\def\bx{{\bf x}}

\def\R{{\rm I\kern-.17em R}}
\def\N{{\rm I\kern-.17em N}}

\newtheorem{remark}{\bf Remark}[section]
\newtheorem{theorem}{\bf Theorem}[section]

\begin{document}
\title{A study on iterative methods for solving Richards' equation}

\author[F. List and F.A. Radu]
       {Florian List$^\dag$ and Florin A. Radu\thanks{florin.radu@uib.no (corresponding author),
       \\{\small
        $^\dag$ University of Stuttgart, Department of Mathematics, Stuttgart, Germany\\
        $^\ast $  University of Bergen, Department of Mathematics, Bergen, Norway}}}

\maketitle

\begin{abstract}
{\bfseries Abstract.} This work concerns linearization methods for efficiently solving the Richards' equation, a degenerate elliptic-parabolic equation which models flow in saturated/unsaturated porous media. The discretization of Richards' equation is based on backward Euler in time and Galerkin finite elements in space. The most valuable linearization schemes for Richards' equation, i.e. the Newton method, the Picard method, the Picard/Newton method and the $L-$scheme are presented and their performance is comparatively studied. The convergence, the computational time and the condition numbers for the underlying linear systems are recorded. The convergence of the $L-$scheme is theoretically proved and  the convergence of the other methods is discussed. A new scheme is proposed, the $L-$scheme/Newton method which is more robust and quadratically convergent. The linearization methods are tested on illustrative numerical examples.
\end{abstract}

\noindent {{\bf Keywords} Richards' equation, linearization schemes, Newton method, Picard method, convergence analysis, flow in porous media, Galerkin finite elements.

\section{Introduction}

There are plenty of societal relevant applications of multiphase flow in porous media, e.g. water and soil pollution, CO$_2$ storage, nuclear waste management, or enhanced oil recovery, to name a few. Mathematical modelling and numerical simulations are powerful, well-recognised tools for predicting flow in porous media and therefore for understanding and finally solving problems like the ones mentioned above. Nevertheless, mathematical models for multiphase flow in porous media involve coupled, nonlinear partial differential equations on huge, complex domains and with parameters which may vary on multiple order of magnitudes. Moreover, typical for the type of applications we mentioned are long term time evolutions, recommending the use of implicit schemes which allow large time steps. Due to these, the design and analysis of appropriate numerical schemes for multiphase flow in porous media is a very challenging task. Despite of intensive research in the last decades, there is still a strong need for robust numerical schemes for multiphase flow in porous media.

In this work we consider a particular case of two-phase flow: flow of water in soil, including the region near the surface where the pores are filled with water and air (unsaturated zone). By considering that the pressure of air remains constant, i.e. zero, water flow through saturated/unsaturated porous media is mathematically described  by Richards' equation

\begin{equation}\label{EQ_BASIC_RICHARDS}
   \partial_t \theta (\Psi) - \nabla \cdot (K(\theta(\Psi)) \nabla (\Psi + z)) = 0,
\end{equation}
which has been proposed by L.A. Richards in 1930 (see e.g. \cite{Bear}). In (\ref{EQ_BASIC_RICHARDS}), $\Psi$ denotes the pressure head, $\theta$ the water content, $K$ stands for the hydraulic conductivity of the medium and $z$ for the height against the gravitational direction. Based on experimental results, different curves have been proposed for describing the dependency between $K$, $\theta$ and $\Psi$ (see e.~g. \cite{Bear}), yielding the nonlinear model (\ref{EQ_BASIC_RICHARDS}). In the saturated zone, i.e. where the pores are filled only with water, we have  $\theta$ and $K$ constants and $\Psi \ge 0$. Whenever the flow is unsaturated,  $\theta$ and $K$ are nonlinear, monotone functions and $\Psi < 0$. We point out that Richards' equation degenerates when $K(\theta(\Psi)) \rightarrow 0$ (slow diffusion case) or when  $\theta^\prime = 0$ (fast diffusion case). The regions of degeneracy depend on the saturation of the medium; therefore these regions are not known a-priori and may vary in time and space. In this paper we concentrate on the fast diffusion case, therefore Richards'  equation will be a nonlinear, degenerate elliptic-parabolic partial differential equation. Typically for this case is also the low regularity of the solution \cite{AltLuckhaus}. The nonlinearities and the degeneracy make the design and analysis of numerical schemes for the Richards' equation very difficult.

The first choice for the temporal discretization is the backward Euler method. There are two reasons for this: the need of a stable discretization allowing large time steps and the low regularity of the solution which does not support any higher order scheme. As regards the spatial discretization there are much more options possible. Galerkin finite elements were used in \cite{Arbogast1990,Arbogast1993,Ebmeyer,NochettoVerdi,Slodicka,Pop}, often together with mass lumping to ensure a maximum principle \cite{Celia}. Locally mass conservative schemes for Richards' equation were proposed and analysed in \cite{Eymard1999,Eymard2006} (finite volumes), in \cite{Klausen} (multipoint flux approximation) or \cite{ArbogastWheelerZhang,Woodward,Yotov,Bause,Radu2004,Radu2014} (mixed finite element method). The analysis is performed mostly by using the Kirchhoff transformation (which combines the two main nonlinearities in one) \cite{AltLuckhaus,ArbogastWheelerZhang,Woodward,Radu2004,Radu2008} or, alternatively, by restricting the applicability, e.g. to strictly unsaturated case \cite{Arbogast1993,Radu2014}. To deal with the low regularity of the solution, a time integration together with the use of Green's operator is usually necessary \cite{NochettoVerdi,ArbogastWheelerZhang}.

The systems to be solved in each time step after temporal and spatial discretization are nonlinear and one needs an efficient and robust algorithm to solve them. The main linearization methods which are used for the Richards equation are: Newton (also called Newton--Raphson in the literature) method, Picard method, modified Picard method, the $L-$scheme, and combination of them. The Newton method, which is quadratically convergent was very successfully applied to Richards' equation in e.g.  \cite{Putti,Radu2006,Park,Celia,LehmannAckerer}. The drawback of Newton's method is that it is only locally convergent and involves the computation of derivatives. Although the use of the solution of the last time step to start the Newton iterations improves considerably the robustness of Newton' method, in the degenerate case (saturated/unsaturated flow) the convergence is ensured only when a regularization step is applied and under additional constraints on the discretization parameters, see \cite{Radu2006} for details. The Picard method is, although widely used, not a good choice when applied to Richards' equation as clearly shown in \cite{Celia,LehmannAckerer}. In \cite{Celia} is proposed an improvement of the Picard method, resulting in a new method called modified Picard. This method coincides with the Newton method in the case of a constant conductivity $K$ or when applied to Richards' equation together with the Kirchhoff transformation. The modified Picard method is only linearly convergent, but more robust than Newton's method. An efficient combination of the modified Picard and the Newton method, the Picard/Newton method is proposed in \cite{LehmannAckerer}. The $L-$method is the only method which uses the monotonicity of $\theta(\cdot)$. It was proposed for Richards' equation in \cite{Yong,Slodicka,Pop2004}. The method is robust and linearly convergent, and does not involve the computation of any derivatives. Moreover, the convergence rate does not depend on the mesh size. In the case of a constant $K$ or when applied to Richards' equation together with the Kirchhoff transformation, one can improve the convergence of the $L-$method by adaptively computing $L$, this being the idea of the J\"ager-Ka\v{c}ur method \cite{Kacur}. The choice of the Jacobian matrix for $L$ would lead to Newton's method, therefore in this case all the three methods (Newton, modified Picard and $L-$scheme) will coincide. It is worth to mention that both the $L-$method and the modified Picard method can be seen as quasi-Newton (or Newton-like) methods. We refer to \cite{KnabnerBook} for a comprehensive presentation of Newton's method and its variants. For the sake of completeness we mention also the semi-smooth Newton method \cite{Serge} and $L-$method \cite{Radu2015} as valuable linearization methods for two-phase flow in porous media.

% linearization method for transport problems?

In this paper we concentrate on linearization methods for Richards' equation. The new contributions of this paper are:
\begin{itemize}
\item{A comprehensive study on the most valuable linearization methods for Richards' equation: the Newton method, the modified Picard, the Picard/Newton method and the $L-$scheme. The study includes numerical convergence, CPU time and condition number of the resulting linear systems. }
\item{The design of a new scheme based on the $L-$scheme and Newton's method, the  $L-$scheme/Newton method which is robust and quadratically convergent.}
\item{Provide the theoretical proof for the convergence of the $L-$scheme for Richards' equation (without using the Kirchhoff transformation) and discuss the convergence of modified Picard and Newton methods. The analysis furnishes new insights and helps towards a deeper understanding of the linearization schemes.}
\end{itemize}

The present paper can be seen as a continuation of the works \cite{Celia}  and \cite{LehmannAckerer}, and it is written in a similar spirit. We added in the study the $L-$schemes (including a new scheme combining Newton's method with the $L-$scheme) and we focus now on 2D numerical results (the two mentioned papers based their conclusions on 1D simulations). We present illustrative examples, with realistic parameters so that the computations are relevant for practical applications.

%add some conclusions already here?

The paper is structured as follows. In the next section the variational formulations (continuous and fully discrete) of Richards' equation are presented together with the considered linearization schemes. In Section \ref{sec:convergence} we discuss the theoretical convergence of the methods. The next section concerns the numerical results. The paper is ending by a concluding section.

%%-----------------------------------------------------------------------------------%%
\section{Linearization methods for Richards' equation}\label{sec:linearization}

Throughout this paper we use common notations in the functional analysis. Let $\Omega$ be a bounded domain in $\R^d, d = 1, 2\,  \rm{ or } \, 3$, having a Lipschitz continuous boundary $\partial \Omega$. We denote by $L^2(\Omega)$ the space of real valued square integrable functions defined on $\Omega$, and by $H^1(\Omega)$ its subspace containing functions having also the first order derivatives in $L^2(\Omega)$. Let $H_0^1(\Omega)$ be the space of functions in  $H^1(\Omega)$ which vanish on $\partial \Omega$, and $H^{-1}(\Omega)$ its dual. Further, we denote by  $\la\cdot,\cdot \ra$ the inner product on $L^2(\Omega)$ or the duality product $H_0^1(\Omega), H^{-1}(\Omega)$, and by $\ds \| \cdot \|$ the norm of $L^2(\Omega)$. $L_f$ stays for the Lipschitz constant of a Lipschitz continuous function $f(\cdot)$.

%For the discretization in time we let $N \in \mathbb N$ be strictly positive, and define the time step $\tau = {T/N}$, as well as $t_n = n\tau$ $(n \in \{1, 2, \dots, N\})$. Furthermore, ${\mathcal{T}_h}$ is a regular decomposition of $\Omega \subset {\mathbb R}^d$ into closed $d$-simplices; $h$ stands for the mesh diameter. Here we assume $\overline\Omega = \cup_{T \in \mathcal T_h} T$, hence $\Omega$ is polygonal.

We consider to solve the Richards' equation \eqref{EQ_BASIC_RICHARDS} on  $(0, T] \times \Omega$, with $T$ denoting the final time and with homogeneous Dirichlet boundary conditions and an initial condition given by $\Psi(0, \bx) = \Psi^0 (\bx)$ for all $\bx \in \Omega$. We will use linear Galerkin finite elements for this study, but the linearization methods considered can be applied to any discretization method. We restrict the formulations and analysis to homogeneous Dirichlet boundary conditions just for the sake of simplicity, the extension to more general boundary conditions being straightforward (the numerical examples in Section \ref{sec:numerics} involve general boundary conditions). The continuous Galerkin formulation of  \eqref{EQ_BASIC_RICHARDS} reads as:

Find $\Psi \in H_0^1(\Omega)$ such that there holds
\begin{equation}
\la \partial_t \theta(\Psi), \phi \ra + \la K(\theta(\Psi)) \left(\nabla \Psi + \ez \right), \nabla \phi \ra = \la f, \phi \ra,\label{eq:richards_weak}
\end{equation}
for all $\phi \in H_0^1(\Omega)$, with $\ez := \nabla z$. Results concerning the existence and uniqueness of solutions of \eqref{eq:richards_weak} can be found in several papers, e.g. \cite{AltLuckhaus}.

For the discretization in time we let $N \in \mathbb N$ be strictly positive, and define the time step $\tau = {T/N}$, as well as $t_n = n\tau$ $(n \in \{1, 2, \dots, N\})$. Furthermore, ${\mathcal{T}_h}$ is a regular decomposition of $\Omega \subset {\R}^d$ into closed $d$-simplices; $h$ stands for the mesh diameter. Here we assume $\overline\Omega = \cup_{T \in \mathcal T_h} T$, hence $\Omega$ is assumed polygonal. The Galerkin finite element space is given by
%The flow domain $\Omega \subset \mathbb{R}^d$ is furnished with a triangulation $\mathcal{T}_h$ into closed $d$-simplices consisting of $k \in \mathbb{N}$ nodes $x_i \in \bar{\Omega}, i \in \{1,\ldots,k\}$. The index $h$ stands for the mesh size. We seek a discrete solution in the $k$-dimensional subspace $V_h \subset H^1(\Omega)$ defined by
\begin{equation}
V_h := \{ v_h \in H_0^1(\Omega) | \, {v_h}_{|_{T}} \in \mathcal{P}_1(T), T \in \mathcal{T}_h\},
\label{eq:def_discrete_subspace}
\end{equation}
where $\mathcal{P}_1(T)$ denotes the space of linear polynomials on any simplex $T$. For details about this finite element space and the implementation we refer to standard books, like e.g. \cite{KnabnerBook}.
%A basis of $V_h$ is then given by the set 
%\begin{equation}
%\mathcal{V}_h := \left\lbrace v_h \in V_h|\ v_h(x_j) = \delta_{ij}, i,j \in  \{1,\ldots,k \}\right\rbrace,
%\end{equation}
%with the Kronecker delta $\delta_{ij}$ and the discrete solution $\Psi_h^n$ at time level $t^n$ takes the form
%\begin{equation}
%\Psi_h^n = \sum_{i=1}^k d_i^n v_h,
%\label{eq:psi_h_n}
%\end{equation}
%with some coefficients $d_i^n \in \mathbb{R}$ and basis functions $v_h \in \mathcal{V}_h$ for $i \in \{1, \ldots, k\}$.
%By virtue of the bilinearity of the inner product, it suffices to demand from the discrete solution $\Psi_h^n$ to satisfy the following equality for the $k$ basis functions in $\mathcal{V}_h$

By using the backward Euler method in time and the linear Galerkin finite elements defined above in space, the fully discrete variational formulation of \eqref{eq:richards_weak} at time $t_n$ reads as:

Let $n \in \{1, \ldots, N\}$ and $\Psi_h^{n-1} \in V_h$ be given. Find $\Psi_h^n \in V_h$ such that there holds
\begin{equation}\label{eq:discrete_formulation}
\la \theta(\Psi_h^{n})-\theta(\Psi_h^{n-1}), v_h \ra + \tau \la K(\Theta(\Psi_h^{n})) \left(\nabla \Psi_h^{n} + \vec{e}_z\right), \nabla v_h \ra = \tau \la f^{n}, v_h \ra,
\end{equation}
for all $v_h \in V_h$. At the first time step we take $\Psi_h^{0} = P_h \Psi^0 \in V_h$, with $P_h:  H_0^1(\Omega) \rightarrow V_h$ being the standard projection. We assume in the next that the fully discrete schemes above have a unique solution and we refer to \cite{Arbogast1990,Arbogast1993,Ebmeyer,Pop} for a proof. 

At this point, dealing with the doubly nonlinear character of Richards' equation due to the relations $K(\theta)$ and $\theta(\Psi)$ is essential. We will briefly present in the following the main linearization methods used to solve the nonlinear problem \eqref{eq:discrete_formulation}: the Newton method, the modified Picard method (called simply Picard's method below, when does not exist a possibility of confusion) and the $L-$schemes.

We denote the discrete solution at time level $n$ (which is now fixed) and iteration $j \in \mathbb{N}$ by $\Psi_h^{n,j}$ henceforth. The iterations are always starting with the solution at the last time step, i.e. $\Psi_h^{n,0} = \Psi_h^{n-1}$. The Newton method to solve \eqref{eq:discrete_formulation} reads as:

Let $\Psi_h^{n-1}, \Psi_h^{n,j-1} \in V_h$ be given. Find $\Psi_h^{n,j} \in V_h$, so that 
\begin{equation}\label{eq:Newton}
\begin{array}{c}
\ds \la \theta(\Psi_h^{n,j-1}), v_h \ra + \la \theta'(\Psi_h^{n,j-1})(\Psi_h^{n,j}-\Psi_h^{n,j-1}), v_h \ra + \tau \la K(\Psi_h^{n,j-1}) (\nabla \Psi_h^{n,j} + \ez), \nabla v_h \ra \\[2ex]
 \ds + \tau \la K'(\Psi_h^{n,j-1}) (\nabla \Psi_h^{n,j-1} + \ez)(\Psi_h^{n,j}-\Psi_h^{n,j-1}), \nabla v_h \ra = \tau \la f^{n}, v_h \ra  + \la \theta(\Psi_h^{n-1}), v_h \ra,
\end{array}
\end{equation}
holds for all $v_h \in V_h$. Newton's method is quadratically, but only locally convergent. As mentioned above, although $\Psi_h^{n,0}:=\Psi_h^{n-1}$ might be an appropriate choice, failure of Newton's method can occur (see \cite{Radu2006} and the numerical examples given below). 

The modified Picard method was proposed by \cite{Celia} and reads as:

Let $\Psi_h^{n-1}, \Psi_h^{n,j-1} \in V_h$ be given. Find $\Psi_h^{n,j} \in V_h$, so that 
\begin{equation}\label{eq:Picard}
\begin{array}{l}
\ds \la \theta(\Psi_h^{n,j-1}), v_h\ra + \la \theta'(\Psi_h^{n,j-1})(\Psi_h^{n,j}-\Psi_h^{n,j-1}), v_h \ra \\[2ex]
\ds \quad + \tau \la K(\Psi_h^{n,j-1}) (\nabla \Psi_h^{n,j}+ \ez), \nabla v_h \ra = \tau \la f^{n}, v_h \ra + \la \theta(\Psi_h^{n-1}), v_h\ra,
\end{array}
\end{equation}
holds for all $v_h \in V_h$. The modified Picard method was shown to perform much better than the classical Picard method \cite{Celia,LehmannAckerer}. The idea is to discretize the time nonlinearity quadratically, whereas the nonlinearity in $K$ is linearly approximated. The method is therefore linearly convergent. The method still involves the computation of derivatives and in the degenerate case might also fail to converge (see the numerical examples in Section \ref{sec:numerics}).

The $L-$method was proposed for Richards' equation by \cite{Yong,Slodicka,Pop2004} and it is the only method which exploits the monotonicity of $\theta$. The $L-$scheme to solve the nonlinear problem  \eqref{eq:discrete_formulation} reads as:

Let $\Psi_h^{n-1}, \Psi_h^{n,j-1} \in V_h$ and $L > 0$ be given. Find $\Psi_h^{n,j} \in V_h$, so that
\begin{equation}\label{eq:L-scheme}
\begin{array}{l}
\la \theta(\Psi_h^{n,j-1}), v_h \ra + L \la \Psi_h^{n,j}-\Psi_h^{n,j-1}, v_h \ra \\[2ex]
\quad + \tau \la K(\Psi_h^{n,j-1}) (\nabla \Psi_h^{n,j} + \ez), \nabla v_h \ra = \tau \la f^{n}, v_h \ra + \la \theta(\Psi_h^{n-1}), v_h\ra,
\end{array}
\end{equation}
holds for all $v_h \in V_h$. To ensure the convergence of the scheme, the constant  $L$ should  satisfy $L \geq L_\theta (=: \sup_{\Psi} |\theta'(\Psi)|$) (see Section \ref{sec:convergence} for details). The $L-$scheme is robust and linearly convergent. Furthermore, the scheme does not involve the computation of any derivatives. The key element in the new scheme is the addition of a stabilisation term  $L \la \Psi_h^{n,j}-\Psi_h^{n,j-1}, v_h \ra $, which together with the monotonicity of $\theta$ will ensure the convergence of the scheme. 

\begin{remark} It is to be seen that in the case of a constant K, the methods \eqref{eq:Newton} and  \eqref{eq:Picard} coincide. Moreover, if $L$ is replaced by the Jacobian matrix in \eqref{eq:L-scheme} one obtains again the modified Picard scheme \eqref{eq:Picard}.
\end{remark}

Any of the linearization methods presented above leads to a system of linear equations for  $\Psi_h^{n,j}$ (more precisely, the unknown will be the vector with the components of $\Psi_h^{n,j}$ in a basis of $V_h$). The derivatives of the water content and the hydraulic conductivity in case of the modified Picard scheme and Newton's method can be computed analytically or by a perturbation approach as suggested by \cite{Forsyth} and occurring integrals are approximated by a quadrature formula. 

For stopping the iterations, we adopt a general criterion for convergence given by
\begin{equation}
\|\Psi_h^{n,j}-\Psi_h^{n,j-1}\| \leq \varepsilon_{a} + \varepsilon_{r} \|\Psi_h^{n,j}\|,
\label{eq:criterion}
\end{equation}
with the Euclidean norm $\|\cdot\|$ and some constants $\varepsilon_{a} > 0$ and $\varepsilon_{r} > 0$. The tolerances $\varepsilon_a$ and $\varepsilon_r$ in criterion \eqref{eq:criterion} are both taken as $10^{-5}$ in all numerical simulations in this paper as proposed by \cite{LehmannAckerer}. We refer to \cite{Huang} for possible improvements of the stopping criterion.

The Newton method is the only method out of the proposed three which is second order convergent. Nevertheless, it is not that robust as the other, linearly convergent, methods. In order to increase the robustness of Newton's method one can perform first a few (modified) Picard iterations, this being the combined Picard/Newton scheme proposed in \cite{LehmannAckerer}. The Picard/Newton method is shown to perform better than both the Newton and the modified Picard method \cite{LehmannAckerer}. We propose in this paper also a combination of the $L-$scheme with the Newton method, the $L-$scheme/Newton method. The mixed methods are based upon the idea to harness the robustness of the $L-$scheme or the modified Picard scheme initially and to switch to Newton's method e.g. if 
\begin{equation}
\|\Psi_h^{n,j}-\Psi_h^{n,j-1}\| \leq \delta_{a} + \delta_{r} \|\Psi_h^{n,j}\|,
\label{eq:criterion_switch}
\end{equation}
is satisfied for $\delta_{a}, \delta_{r} > 0$, similar to criterion \eqref{eq:criterion}. However, an appropriate choice of the parameters $\delta_{a}, \delta_{r}$ is intricate and heavily dependent on the problem, for which reason a switch of the method after a fixed number of iterations may be an alternative. As shown in Section \ref{sec:numerics} this new method incorporating the $L-$scheme seems to perform best with respect to computing time and robustness.

%% ------------------------------------------------------------------------ %%

\section{Convergence results}\label{sec:convergence}
In this section we will rigorously analyse the convergence of the $L-$scheme and discuss the convergence of Newton's and modified Picard's method. We denote by
\begin{equation}
e^{n,j} = \Psi^{n,j}_h- \Psi^{n}_h,
\end{equation}
the error at iteration $j$. A scheme is convergent if $e^{n,j} \rightarrow 0$, when $j \rightarrow \infty$.

The following assumptions on the coefficient functions and the discrete solution are defining the framework in which we can prove the convergence of the $L-$scheme.
\begin{itemize}
\item[(A1)]{The function $\theta(\cdot)$ is monotonically increasing and Lipschitz continuous. }
\item[(A2)]{The function $K(\cdot)$ is Lipschitz continuous and there exist two constants $K_m$ and $K_M$ such that $ 0 < K_m \le K(\theta) \le K_M < \infty, \, \, \forall \theta \in \R.$}
\item[(A3)]{The solution of problem \eqref{eq:discrete_formulation}  satisfies $\| \nabla \Psi^n_h \|_\infty \le M < \infty $, with $\| \cdot \|_\infty$ denoting the $L^\infty(\Omega)$-norm.}
%\item[A4]{}
%\item[A5]{}
\end{itemize}

We can now state the central theoretical result of this paper.

\begin{theorem}\label{theorem:convergenceLscheme}
Let $n \in \{1, \ldots, N\}$ be given and assume (A1) - (A3) hold true. If the constant $L$ and the time step are chosen such that \eqref{condition_convergence_Lscheme} is satisfied, the $L-$scheme \eqref{eq:L-scheme} converges linearly, with a rate of convergence given by
\begin{equation}\label{rate}
\sqrt{ \dfrac{L}{ L + \dfrac{K_m \tau}{C_\Omega^2} } }.
\end{equation}
\end{theorem}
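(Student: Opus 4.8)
The plan is to establish a one-step contraction estimate of the form $\|e^{n,j}\| \le q\,\|e^{n,j-1}\|$ directly in the $L^2(\Omega)$-norm, with $q$ equal to the quantity in \eqref{rate}, and then to iterate in $j$ starting from $\Psi_h^{n,0}=\Psi_h^{n-1}$. First I would subtract the fully discrete equation \eqref{eq:discrete_formulation} (tested against an arbitrary $v_h\in V_h$) from the $L$-scheme relation \eqref{eq:L-scheme} at the same time level $n$, to obtain the error equation
\[
\begin{array}{l}
\la \theta(\Psi_h^{n,j-1}) - \theta(\Psi_h^{n}), v_h \ra + L \la \Psi_h^{n,j} - \Psi_h^{n,j-1}, v_h \ra \\[1ex]
\quad + \tau \la K(\Psi_h^{n,j-1})(\nabla \Psi_h^{n,j} + \ez) - K(\Psi_h^{n})(\nabla \Psi_h^{n} + \ez), \nabla v_h \ra = 0,
\end{array}
\]
valid for all $v_h\in V_h$. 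I would then test with $v_h = e^{n,j} = \Psi_h^{n,j}-\Psi_h^{n}\in V_h$, using $\Psi_h^{n,j}-\Psi_h^{n,j-1} = e^{n,j}-e^{n,j-1}$, and estimate the three resulting groups of terms separately.

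For the first two groups I would use the stabilization together with assumption (A1). Rewriting them as $L\|e^{n,j}\|^2 + \la \theta(\Psi_h^{n,j-1}) - \theta(\Psi_h^{n}) - L\,e^{n,j-1},\, e^{n,j}\ra$ and observing that, because $\theta$ is monotone with slope at most $L_\theta \le L$, the integrand $\theta(\Psi_h^{n,j-1}) - \theta(\Psi_h^{n}) - L\,e^{n,j-1}$ is bounded pointwise in absolute value by $L\,|e^{n,j-1}|$, the Cauchy--Schwarz inequality yields the lower bound $L\|e^{n,j}\|^2 - L\,\|e^{n,j-1}\|\,\|e^{n,j}\|$. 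This is the only place where the defining requirement $L\ge L_\theta$ enters, and it is exactly what converts the monotonicity of $\theta$ into usable coercivity.

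For the diffusion group I would split $K(\Psi_h^{n,j-1})(\nabla \Psi_h^{n,j} + \ez) - K(\Psi_h^{n})(\nabla \Psi_h^{n} + \ez) = K(\Psi_h^{n,j-1})\nabla e^{n,j} + \bigl(K(\Psi_h^{n,j-1}) - K(\Psi_h^{n})\bigr)(\nabla \Psi_h^{n} + \ez)$. The first part is bounded below by $K_m\tau\|\nabla e^{n,j}\|^2$ via the lower bound in (A2); the second part, by the Lipschitz continuity of $K$ in (A2) together with the a priori bound (A3), is controlled by $\tau L_K(M+1)\,\|e^{n,j-1}\|\,\|\nabla e^{n,j}\|$, of which the $\|\nabla e^{n,j}\|$ factor is partly absorbed into the coercive term by Young's inequality. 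Collecting the three groups, applying the Poincaré inequality $\|e^{n,j}\|\le C_\Omega\|\nabla e^{n,j}\|$ to trade $\|\nabla e^{n,j}\|^2$ for $\|e^{n,j}\|^2$, and using Young's inequality on $L\|e^{n,j-1}\|\,\|e^{n,j}\|$, I would arrive — once condition \eqref{condition_convergence_Lscheme} is invoked to guarantee that the residual contribution of the $K$-nonlinearity is dominated — at an inequality of the form $\bigl(L + \tfrac{K_m\tau}{C_\Omega^2}\bigr)\|e^{n,j}\|^2 \le L\,\|e^{n,j-1}\|^2$. Taking square roots gives the rate \eqref{rate}, which is strictly less than $1$, and iterating yields $e^{n,j}\to 0$ geometrically as $j\to\infty$.

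I expect the main obstacle to be the handling of the $K$-nonlinearity term $\bigl(K(\Psi_h^{n,j-1}) - K(\Psi_h^{n})\bigr)(\nabla \Psi_h^{n} + \ez)$: without the $L^\infty$-bound (A3) on $\nabla\Psi_h^{n}$ it cannot be estimated at all, and even with it the Young-inequality splitting leaves a term proportional to $\tau L_K^2 M^2\|e^{n,j-1}\|^2$ that has to be dominated by the $\tfrac{K_m\tau}{C_\Omega^2}\|e^{n,j}\|^2$ produced by coercivity plus the $L\|e^{n,j-1}\|^2$ produced by the stabilization. Achieving this absorption in a way that leaves the clean rate \eqref{rate} (with no spurious factors of $L_K$ or of $2$) is precisely what the condition \eqref{condition_convergence_Lscheme} encodes, coupling the lower bound on $L$ with an upper bound on $\tau$.
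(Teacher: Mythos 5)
Your skeleton (subtract \eqref{eq:discrete_formulation} from \eqref{eq:L-scheme}, test with $e^{n,j}$, split the diffusion term into a coercive part and a perturbation, finish with Poincar\'e) is the paper's skeleton, but the way you treat the $\theta$-term loses exactly the quantity that makes the theorem work, and this is a genuine gap rather than a cosmetic difference. The paper does not merely use monotonicity qualitatively: it uses the inequality $\la \theta(a)-\theta(b),\,a-b\ra \ge \tfrac{1}{L_\theta}\|\theta(a)-\theta(b)\|^2$ (valid for monotone, $L_\theta$-Lipschitz $\theta$) after writing $e^{n,j}=e^{n,j-1}+(e^{n,j}-e^{n,j-1})$ and expanding $L\la e^{n,j}-e^{n,j-1},e^{n,j}\ra$ by polarization. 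This puts a \emph{reservoir} $\tfrac{1}{L_\theta}\|\theta(\Psi_h^{n,j-1})-\theta(\Psi_h^{n})\|^2$ on the left-hand side. Both the cross term $\la\theta(\Psi_h^{n,j-1})-\theta(\Psi_h^{n}),\,e^{n,j}-e^{n,j-1}\ra$ (contributing $\tfrac{1}{2L}\|\theta(\cdot)-\theta(\cdot)\|^2$ after Young) and the $K$-perturbation (bounded via (A2) by $L_K$ times the \emph{$\theta$-difference}, not by $|e^{n,j-1}|$, hence contributing $\tfrac{\tau(M+1)^2L_K^2}{2K_m}\|\theta(\cdot)-\theta(\cdot)\|^2$) are absorbed into that reservoir; condition \eqref{condition_convergence_Lscheme} is precisely the statement that the reservoir is large enough, and after absorption nothing is left over, which is why the rate \eqref{rate} is clean.

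Your version replaces this by the pointwise bound $|\theta(a)-\theta(b)-L(a-b)|\le L|a-b|$ followed by Cauchy--Schwarz, which throws the reservoir away, and you measure the $K$-perturbation by $\|e^{n,j-1}\|$ instead of by the $\theta$-difference. After your Young steps the $K$-contribution $\tfrac{\tau(M+1)^2L_K^2}{2K_m}\|e^{n,j-1}\|^2$ sits on the right-hand side next to $\tfrac{L}{2}\|e^{n,j-1}\|^2$ and there is no term on the left into which it can be absorbed; invoking \eqref{condition_convergence_Lscheme} does not help because the quantities $\tfrac{2}{L_\theta}$ and $\tfrac{1}{L}$ appearing there never arise in your derivation. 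What you actually obtain is $\bigl(L+\tfrac{\tau K_m}{C_\Omega^2}\bigr)\|e^{n,j}\|^2\le\bigl(L+\tfrac{\tau(M+1)^2L_K^2}{K_m}\bigr)\|e^{n,j-1}\|^2$, whose rate is worse than \eqref{rate} and which is a contraction only under the unrelated data condition $C_\Omega^2(M+1)^2L_K^2<K_m^2$, independent of $\tau$ and $L$. (A minor further point: your pointwise bound needs only $L\ge L_\theta/2$, not $L\ge L_\theta$ as you state, but that does not repair the main issue.) To close the gap you must keep the term $\tfrac{1}{L_\theta}\|\theta(\Psi_h^{n,j-1})-\theta(\Psi_h^{n})\|^2$ explicitly and route every perturbation through the $\theta$-difference, as in \eqref{eq:proof2}--\eqref{eq:proof4}.
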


\begin{proof} By subtracting \eqref{eq:discrete_formulation}   from  \eqref{eq:L-scheme} we obtain for any $v_h \in V_h$ and any $j \ge 1$
\begin{displaymath}
\begin{array}{l}
\ds \la \theta(\Psi_h^{n,j-1}) -  \theta(\Psi_h^{n}), v_h \ra + L \la e^{n,j}-e^{n,j-1}, v_h \ra \\[2ex]
\ds \quad + \tau \la K(\Psi_h^{n,j-1}) \nabla \Psi_h^{n,j} - K(\Psi_h^{n}) \nabla \Psi_h^{n}, \nabla v_h \ra +\tau  \la (K(\Psi_h^{n,j-1})  - K(\Psi_h^{n}) )\ez, \nabla v_h \ra = 0.
\end{array}
\end{displaymath}
By testing the above with $v_h = e^{n,j}$ and doing some algebraic manipulations one gets
\begin{equation}\label{eq:proof1}
\begin{array}{l}
\ds \la \theta(\Psi_h^{n,j-1}) -  \theta(\Psi_h^{n}), e^{n,j-1} \ra + \la \theta(\Psi_h^{n,j-1}) -  \theta(\Psi_h^{n}), e^{n,j}-e^{n,j-1} \ra   \\[2ex]
\ds \quad + \dfrac{L}{2} \|e^{n,j}\|^2   +  \dfrac{L}{2} \|e^{n,j}-e^{n,j-1}\|^2  -  \dfrac{L}{2} \|e^{n,j-1}\|^2   \\[2ex]
\ds \quad + \tau \la K(\Psi_h^{n,j-1}) \nabla e^{n,j}, \nabla e^{n,j} \ra + \la (K(\Psi_h^{n,j-1})  - K(\Psi_h^{n}) )\nabla \Psi_h^{n}, \nabla e^{n,j} \ra   \\[2ex]
\ds \quad  +\tau  \la (K(\Psi_h^{n,j-1})  - K(\Psi_h^{n}) )\ez, \nabla e^{n,j} \ra = 0,
\end{array}
\end{equation}
or, equivalently
\begin{equation}\label{eq:proof2}
\begin{array}{l}
\ds \la \theta(\Psi_h^{n,j-1}) -  \theta(\Psi_h^{n}), e^{n,j-1} \ra  + \dfrac{L}{2} \|e^{n,j}\|^2   +  \dfrac{L}{2} \|e^{n,j}-e^{n,j-1}\|^2  \\[2ex]
\ds \quad + \tau \la K(\Psi_h^{n,j-1}) \nabla e^{n,j}, \nabla e^{n,j} \ra   = \dfrac{L}{2} \|e^{n,j-1}\|^2  - \la \theta(\Psi_h^{n,j-1}) -  \theta(\Psi_h^{n}), e^{n,j}-e^{n,j-1} \ra \\[2ex]
\ds   \quad   - \la (K(\Psi_h^{n,j-1})  - K(\Psi_h^{n}) )\nabla \Psi_h^{n}, \nabla e^{n,j} \ra - \tau  \la (K(\Psi_h^{n,j-1})  - K(\Psi_h^{n}) )\ez, \nabla e^{n,j} \ra.
\end{array}
\end{equation}
By using now the monotonicity of $\theta(\cdot)$, its Lipschitz continuity (A1), the boundedness (from below) and Lipschitz continuity of $K(\cdot)$, i.e. (A2), the boundedness of  $\nabla \Psi_h^{n}$, and the Young and Cauchy-Schwarz inequalities one obtains from \eqref{eq:proof2}
\begin{equation}\label{eq:proof3}
\begin{array}{l}
\ds \dfrac{1}{L_\theta} \| \theta(\Psi_h^{n,j-1}) -  \theta(\Psi_h^{n})\|^2 + \dfrac{L}{2} \|e^{n,j}\|^2   +  \dfrac{L}{2} \|e^{n,j}-e^{n,j-1}\|^2  + \tau K_m \| \nabla e^{n,j} \|^2 \\[2ex]
\ds \quad  \le  \dfrac{L}{2} \|e^{n,j-1}\|^2  +  \dfrac{1}{2 L} \| \theta(\Psi_h^{n,j-1}) -  \theta(\Psi_h^{n}) \|^2 +   \dfrac{L}{2}  \|e^{n,j}-e^{n,j-1} \|^2 \\[2ex]
\ds   \quad \quad  + \dfrac{\tau (M+1)^2 L_K^2} {2 K_m}\| (\theta(\Psi_h^{n,j-1}) - \theta(\Psi_h^{n})) \|^2  + \dfrac{\tau K_m}{2} \| \nabla e^{n,j} \|^2 .
\end{array}
\end{equation}
After some obvious simplifications, the inequality \eqref{eq:proof3} becomes
\begin{equation}\label{eq:proof4}
\begin{array}{l}
\ds L\|e^{n,j}\|^2   +  \tau K_m\| \nabla e^{n,j} \|^2 \\[2ex]
\ds \quad  +  (\dfrac{2}{L_\Theta} - \dfrac{1}{L} - \dfrac{\tau (M+1)^2 L_K^2} {K_m}) \| \theta(\Psi_h^{n,j-1}) -  \theta(\Psi_h^{n})\|^2  \le  L \|e^{n,j-1}\|^2.
\end{array}
\end{equation}
Finally, by choosing $L >0 $ and the time step $\tau$ such  that 
\begin{equation}\label{condition_convergence_Lscheme}
\dfrac{2}{L_\theta} - \dfrac{1}{L} - \dfrac{\tau (M+1)^2 L_K^2} {K_m} \ge 0
\end{equation}
and by using the Poincare inequality (recall that $e^{n,j} \in H_0^1(\Omega) $)
\begin{equation}\label{poincare}
\| e^{n,j} \| \le C_\Omega \| \nabla e^{n,j} \|,
\end{equation}
from \eqref{eq:proof4} follows the convergence of the scheme \eqref{eq:L-scheme}
\begin{equation}\label{eq:proof5}
\begin{array}{l}
\ds \|e^{n,j}\|^2  \le \dfrac{L}{ L + \dfrac{K_m \tau}{C_\Omega^2} } \|e^{n,j-1}\|^2 .
\end{array}
\end{equation}

\end{proof}

We continue with some important remarks concerning the result above and the implications to the convergence of the Newton and modified Picard methods.

\begin{remark}\label{remark:noK} In the case of a constant hydraulic conductivity $K$ (or if we refer to Richards' equation after Kirchhoff's transformation and without gravity, see e.g. \cite{Pop2004}), the condition for convergence of the $L-$scheme \eqref{eq:L-scheme} simply becomes 
\begin{equation}\label{condition_constantK}
L \ge \dfrac{L_\theta}{2}
\end{equation}
and there is {\bf no restriction} on the time step size. Furthermore, the assumptions (A2) and (A3) are not necessary in this case.
\end{remark}

\begin{remark} \label{remark:rate} The  rate of convergence \eqref{rate} depends on $K_m$, $\tau$ and  $ L$, but it is independent of the mesh size. Smaller $L$ or larger time steps are resulting in a faster convergence. We also emphasise that larger hydraulic conductivities will imply a faster convergence as well.
\end{remark}

\begin{remark}\label{remark:restriction_tau} In the general case, the optimal choice is $L = L_\theta$ and $\tau = \dfrac{K_m}{L_\theta (M+1)^2 L_K^2}$. The restriction on the time step size (after choosing $L = L_\theta$) is $\tau \le \dfrac{K_m}{L_\theta (M+1)^2 L_K^2} $, which is relatively mild because it does not involve the mesh size or any regularization parameter.
\end{remark}

\begin{remark} \label{remark:global_convergence}The convergence of the $L-$scheme is global, i.e. independent of the initial choice. Nevertheless, it is obviously beneficial if one starts the iterations with the solution of the last time step.
\end{remark}

\begin{remark} \label{remark:convergenceNewton_constantK}The convergence of the modified Picard method and of the Newton method is studied in \cite{Radu2006} for the case of constant $K$ or for Richards' equation after Kirchhoff's transformation and without gravity. A regularization step is in this case necessary to ensure the convergence. The corresponding convergence condition to \eqref{condition_convergence_Lscheme} will look like
\begin{equation}\label{condition_Newton_constantK}
\tau \le C \epsilon^3 h^d,
\end{equation}
with $\epsilon$ denoting the regularization parameter and $h$ the mesh size, $d$ the spatial dimension and $C$ a constant not depending on the discretization parameters. A similar condition is derived also for the J\"ager-Ka\v{c}ur scheme, see \cite{Radu2006}. We remark that the condition \eqref{condition_Newton_constantK} is much more restrictive than the condition  \eqref{condition_convergence_Lscheme}. The proofs in \cite{Radu2006} are done for mixed finite element based discretizations, but the proof for Galerkin finite elements is similar. The condition \eqref{condition_Newton_constantK} is derived by using some inverse estimates and it is in practice quite pessimistic. Nevertheless, we emphasise the fact that the convergence is ensured only when doing a regularization step (reflected by the $\epsilon$ in  \eqref{condition_Newton_constantK}) and this is what one sees in practice as well (see Section \ref{sec:numerics}).
\end{remark}

\begin{remark} \label{remark:convergenceNewton}One can extend the convergence proof in \cite{Radu2006} for Newton's and modified Picard's methods to the general case of a nonlinear $K$ and saturated/unsaturated flow. Under a similar assumption (A2) for the modified Picard and an assumption involving also the Lipschitz continuity of the derivative of $K$ for Newton's method one can show the convergence of the methods. The modified Picard will be linearly convergent, whereas Newton's method will converge quadratically. The condition of convergence will be similar to \eqref{condition_Newton_constantK} for both methods. From the theoretical point of view, only a quantitatively increased robustness for the Picard method comparing with Newton's method should be expected, i.e. when e.g. the mesh size becomes smaller if one of the method fails then also the other (see Fig. \ref{fig:example_1_Iter_psi_vad_3}, where Newton's methods is not converging and modified Picard converges, but increasing the number of elements leads to divergence for the modified Picard or Picard/Newton methods as well). This is not the case with the $L-$scheme, which is clearly the most robust out of the considered methods, see Section \ref{sec:numerics}.
\end{remark}

\begin{remark} \label{remark:adaptivity} By using error estimates derived as mentioned in the remark above, one can construct an indicator to predict the convergence of Newton's method. Based on this, one can design an adaptive algorithm for using the $L-$scheme only when necessary. Nevertheless, because the $L-$iterations are so cheap and the resulting linear systems are (much) better conditioned, it seems that the $L-$scheme/Newton is almost that fast as the Newton method. In Example 1 in Section  \ref{sec:numerics} we even experienced that the $L-$scheme/Newton was faster than the Newton method. Therefore, we simply recommend the use of the $L-$scheme/Newton with a fixed number of $L-$iterations (4-5), without any indicator predictions. It the case of convergence failing, one should as a response automatically increase the number of $L-$iterations. We never experienced the need of more than 10 $L-$iterations in order to guarantee the convergence of the $L-$scheme/Newton.
\end{remark}

%% ------------------------------------------------------------------------ %%
\section{Numerical results}\label{sec:numerics}
In this section, numerical results in two spatial dimensions are presented. The considered linearization schemes: the Newton method, the modified Picard method, Picard/Newton, the $L-$scheme and the $L-$scheme/Newton are comparatively studied. We focus on convergence, computational time and the condition number of the underlying linear systems. We consider two main numerical examples, both based on realistic parameters. The first one was developed by us, the second is a benchmark problem from \cite{Schneid}. Different conditions are created by varying the parameters. The sensitivity of the schemes w.r.t. the mesh size $h$ is particularly studied. All computations were performed on a Schenker XMG notebook with an Intel Core i7-3630GM processor. 

The relationships $K(\Psi)$ and $\theta(\Psi)$ for both examples are provided by the van Genuchten--Mualem model, namely
\begin{equation}
\begin{aligned}
\theta(\Psi) &=
\begin{cases}
\hspace{-.0cm} \theta_R + (\theta_S - \theta_R) \left[\frac{1}{1+(-\alpha \Psi)^n}\right]^{\frac{n-1}{n}}, & \Psi \leq 0, \\
\hspace{-.0cm} \theta_S, & \Psi > 0,
\end{cases} \\
K(\Psi) &= 
\begin{cases}
\hspace{-.0cm} K_S \theta(\Psi)^{\frac{1}{2}} \left[1-\left(1-\theta(\Psi)^{\frac{n}{n-1}}\right)^{\frac{n-1}{n}}\right]^2, & \Psi \leq 0, \\
\hspace{-.0cm} K_S, & \Psi > 0,
\end{cases}
\end{aligned}
\label{eq:Van_Genuchten}
\end{equation}
in which $\theta_S$ and $K_S$ denote the water content respectively the hydraulic conductivity when the porous medium is fully saturated, $\theta_R$ is the residual water content and $\alpha$ and $n$ are model parameters related to the soil properties. We compute the derivatives of $K$ and $\theta$ analytically whenever they arise. The evaluation of integrals is executed by applying a quadrature formula accurate for polynomials up to a degree of 4. 

\begin{remark}
The use of automatic differentiation might speed up the Newton method, but the concerns regarding the robustness will remain. This and the fact that most of the codes for solving Richards'	 equation do not have implemented automatic differentiation, were the reasons to compute the derivatives as mentioned above.
\end{remark}
%% ------------------------------------------------------------------------ %%

\subsection{Example 1}
\label{sec:example_1}
This example deals with injection and extraction in the vadose zone $\Omega_{\mathrm{vad}}$ located above the groundwater zone $\Omega_{\mathrm{gw}}$. The composite flow domain is $\Omega = \Omega_{\mathrm{vad}} \cup \Omega_{\mathrm{gw}}$ defined as $\Omega_{\mathrm{vad}} = (0,1) \times (-3/4,0) $ and $\Omega_{\mathrm{gw}} = (0,1) \times (-1,-3/4]$. We choose the van Genuchten parameters $\alpha=0.95$, $n=2.9$, $\theta_S=0.42$, $\theta_R=0.026$ and $K_S=0.12$ in parametrization \eqref{eq:Van_Genuchten}. The choice $n > 2$ implies Lipschitz continuity of both $\theta$ and $K$. Constant Dirichlet conditions $\Psi \equiv -3$ on the surface $\Gamma_D = (0,1) \times \{0\}$ and no-flow Neumann conditions on $\Gamma_N = \partial \Omega \setminus \Gamma_D$ are imposed. The initial pressure height distribution is discontinuous at the transition of the groundwater to the vadose zone and is given by $\Psi^0 \equiv \Psi_{\mathrm{vad}}$ on $\Omega_{\mathrm{vad}}$ and $\Psi^0 = \Psi^0(z) = -z - 3/4$ on $\Omega_{\mathrm{gw}}$. We investigate two initial pressure heights in the vadose zone, $\Psi_{\mathrm{vad}} \in \{-3, -2\}$. In the vadose zone, we select a source term taking both positive and negative values given by $f = f(x,z) = 0.006 \cos(4/3 \pi z) \sin(2 \pi x)$ on $\Omega_{\mathrm{vad}}$, whereas we have $f \equiv 0$ in the saturated zone $\Omega_{\mathrm{gw}}$. 

We examine the numerical solutions after the first time step for $\tau = 1$. A regular mesh is employed, consisting of right-angled triangles whose legs are of length $h = \Delta x = \Delta z$ for $h \in \{\frac{1}{10},\frac{1}{20},\frac{1}{30},\frac{1}{40},\frac{1}{50},\frac{1}{60}\}$ (the mesh size is actually $h\sqrt{2}$). The parameters regulating the switch for the mixed methods are taken as $\delta_a = 2$ and $\delta_r = 0$. The computation using the $L-$scheme was carried out with parameter $L$ slightly greater than $L_\theta = \sup_{\Psi} \theta'(\Psi) = 0.2341$ for the given van Genuchten parametrization, to be specific $L = 0.25$.  However, as pointed out in the analysis, when the influence of the nonlinear $K$ is not that big (see Remark \ref{remark:noK}), a constant $L$ bigger than $\dfrac{L_\theta}{2}$ is enough for the convergence. According to our experience, this is the limit relevant for the practice. Hence, we performed another computation with parameter $L = 0.15$. For the mixed $L-$scheme/Newton we choose $L = 0.15$ as well.

The results for Example 1 are presented in Figs. \ref{fig:example_1_Iter_psi_vad_2} - \ref{fig:example_1_cond_psi_vad_3} and discussed in detail below.

\subsubsection{Convergence}
In case of higher initial moisture in the vadose zone, that is $\Psi_{\mathrm{vad}} = -2$, convergence was observed for all methods and all investigated meshes. For the choice $\Psi_{\mathrm{vad}} = -3$, Newton's method failed on each mesh, the modified Picard scheme exhibited convergence only for $h \geq \frac{1}{40}$, whereas both parametrizations of the $L-$scheme converged on all meshes. This is consistent with the theoretical findings in Section \ref{sec:convergence}, in particular with Remark \ref{remark:convergenceNewton}.

\subsubsection{Numbers of iterations}
The required numbers of iterations are depicted in Figs. \ref{fig:example_1_Iter_psi_vad_2} and \ref{fig:example_1_Iter_psi_vad_3}. Missing markers indicate that the iteration has not converged. For either value of $\Psi_{\mathrm{vad}}$, the smaller parameter $L = 0.15$ in the $L-$scheme yielded the criterion for convergence to be fulfilled after fewer iterations than $L = 0.25$. 

For $\Psi_{\mathrm{vad}} = -2$, the modified Picard scheme required less iterations than the $L-$scheme on coarse meshes, but for $h \leq \frac{1}{40}$, it needed at least as many iterations as the $L-$scheme. Newton's method featured an even smaller number of iterations which was found to be independent of the mesh size in our computation. The number of iterations for the mixed Picard/Newton scheme did not differ significantly from the one for Newton's method, while the mixed $L-$scheme/Newton needed the least iterations. 

For $\Psi_{\mathrm{vad}} = -3$, the modified Picard scheme had a benefit over the $L-$scheme in view of the number of iterations whenever it converged, although the number of iterations increased considerably as the mesh became finer. The mixed schemes gave the best results with respect to the number of iterations, the application of the mixed Picard/Newton scheme however being limited to coarse meshes.

\begin{figure}[h!]
\begin{center}
\includegraphics[scale=.18]{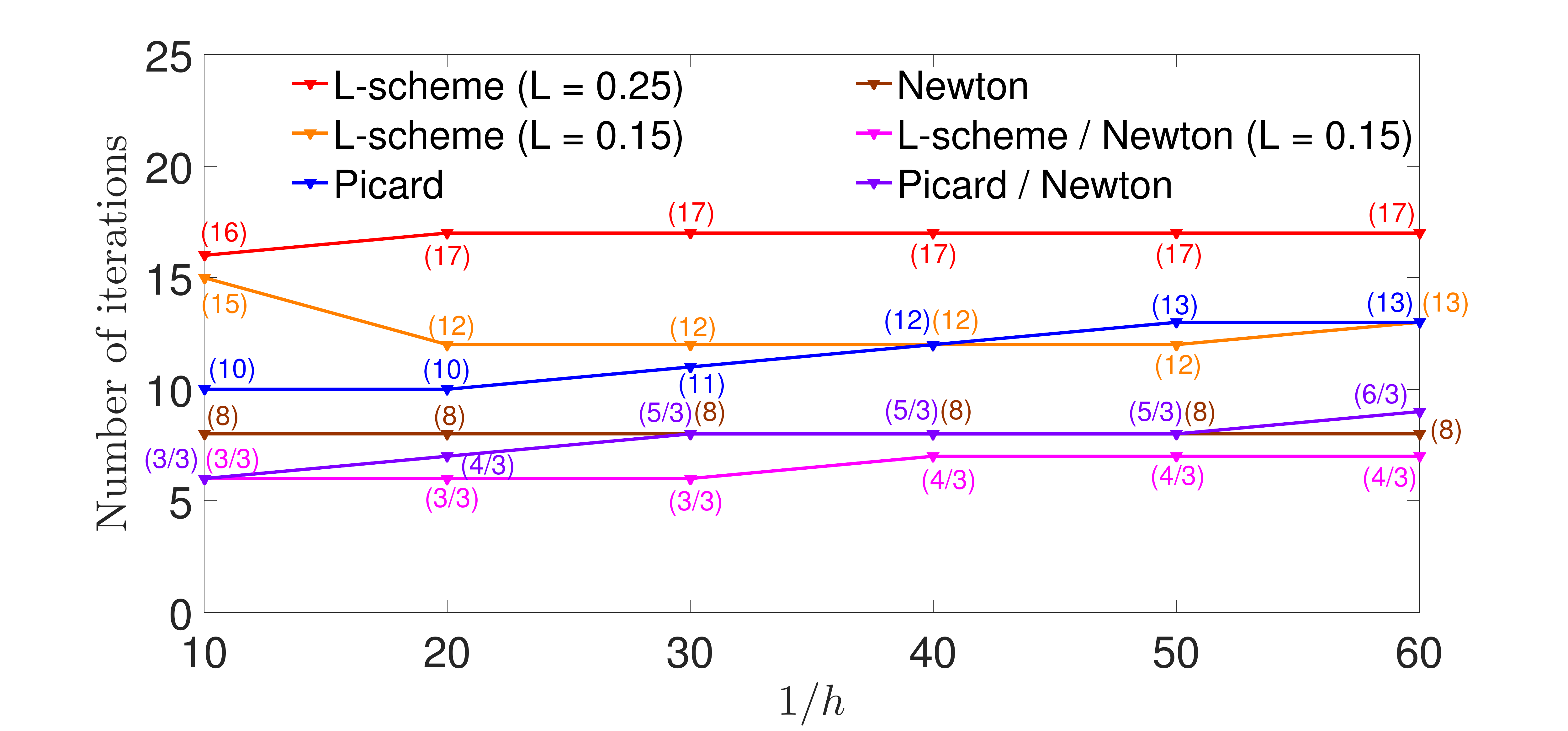}
\caption{Numbers of iterations for several mesh sizes, $\Psi_{\mathrm{vad}} = -2$}
\label{fig:example_1_Iter_psi_vad_2}
\end{center}
\end{figure}
\begin{figure}[h!]
\begin{center}
\includegraphics[scale=.18]{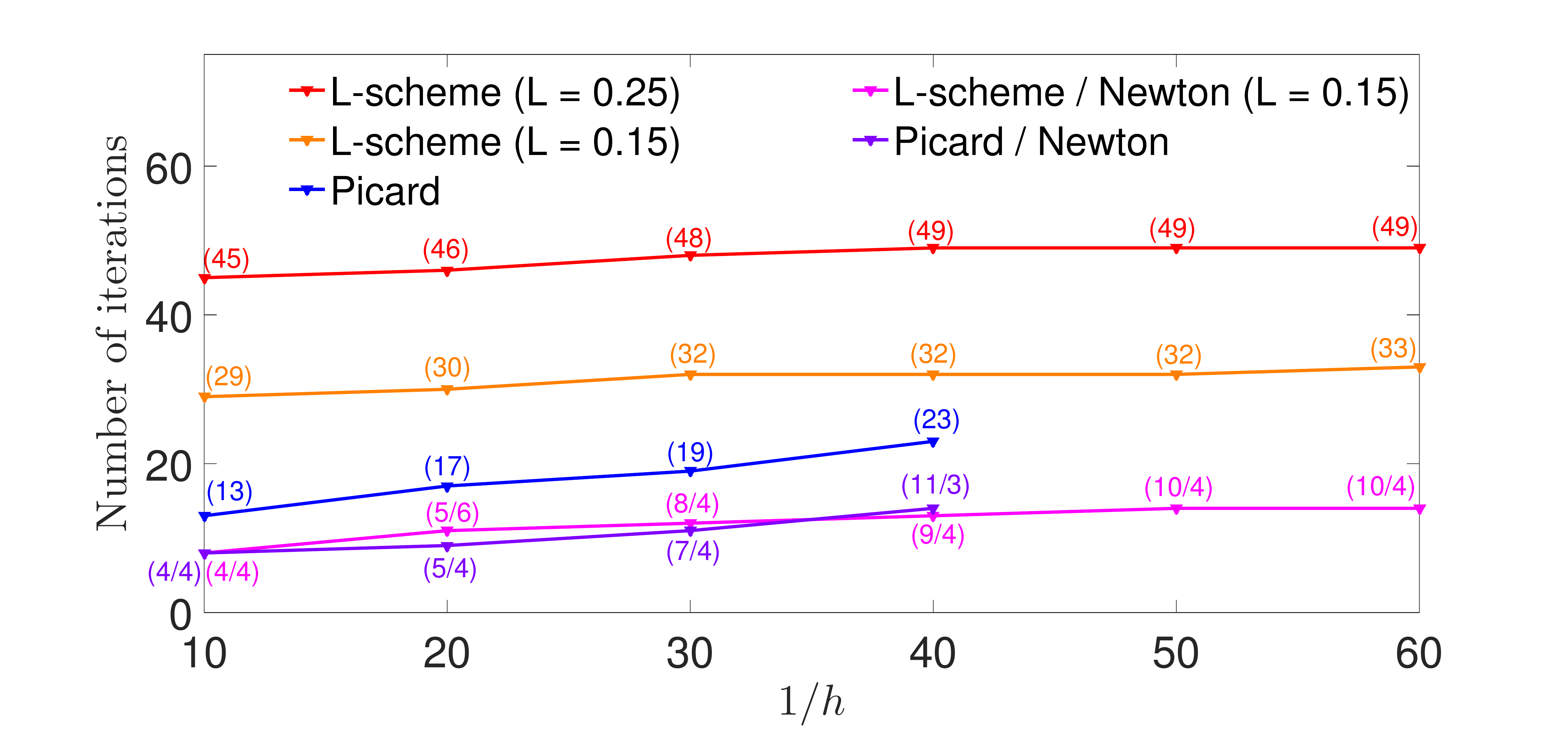}
\caption{Numbers of iterations for several mesh sizes,  $\Psi_{\mathrm{vad}} = -3$}
\label{fig:example_1_Iter_psi_vad_3}
\end{center}
\end{figure}

\subsubsection{Computation times}
Fig. \ref{fig:example_1_CPU_psi_vad_2} shows the simulation times for $\Psi_{\mathrm{vad}} = -2$. Although the modified Picard scheme needed less iterations than the $L-$scheme with $L = 0.25$, the differences of computation times were small, since the modified Picard scheme requires the computation of matrices including $\theta'(\Psi)$. For Newton's method, $K'(\Psi)$ has to be calculated in addition. Nevertheless, it converged more rapidly than the modified Picard scheme. As reported by \cite{LehmannAckerer}, combination of the modified Picard scheme and Newton's method further improved the performance in terms of computation time. However, both $L-$scheme with $L = 0.15$ and mixed $L-$scheme/Newton exhibited faster convergence than the mixed Picard scheme on dense grids, the mixed $L-$scheme/Newton only taking $71.2\%$ of computation time compared to the mixed Picard/Newton scheme for $h = \frac{1}{60}$. 

The simulation times for $\Psi_{\mathrm{vad}} = -3$ are presented in Fig. \ref{fig:example_1_CPU_psi_vad_3}. The mixed schemes computed the solution faster than the non-mixed schemes on each mesh, the mixed $L-$scheme/Newton taking roughly half the computation time in comparison to the non-mixed $L-$scheme with $L = 0.15$.

\begin{figure}[h!]
\begin{center}
\includegraphics[scale=.18]{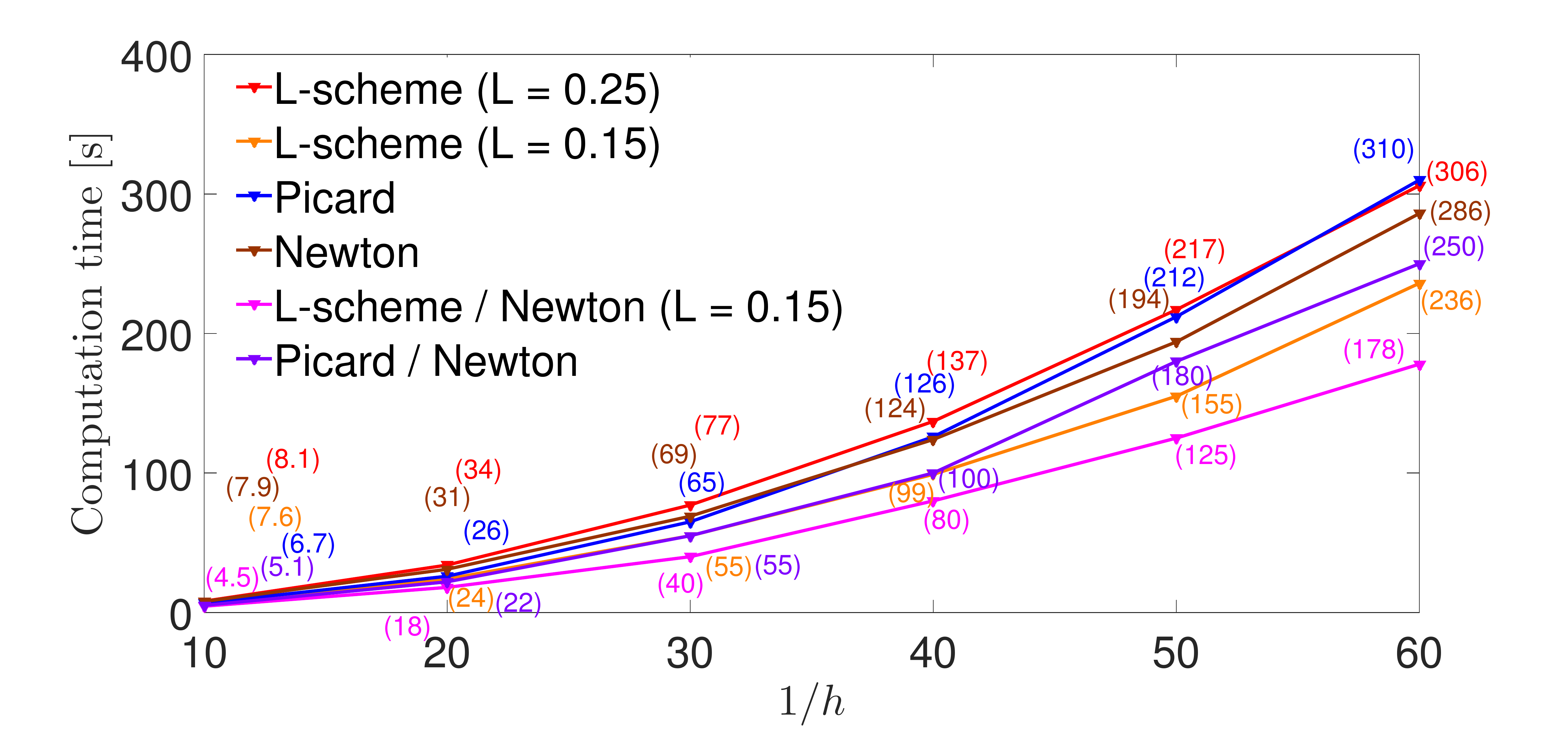}
\caption{Computation times for several mesh sizes,  $\Psi_{\mathrm{vad}} = -2$}
\label{fig:example_1_CPU_psi_vad_2}
\end{center}
\end{figure}
\begin{figure}[h!]
\begin{center}
\includegraphics[scale=.18]{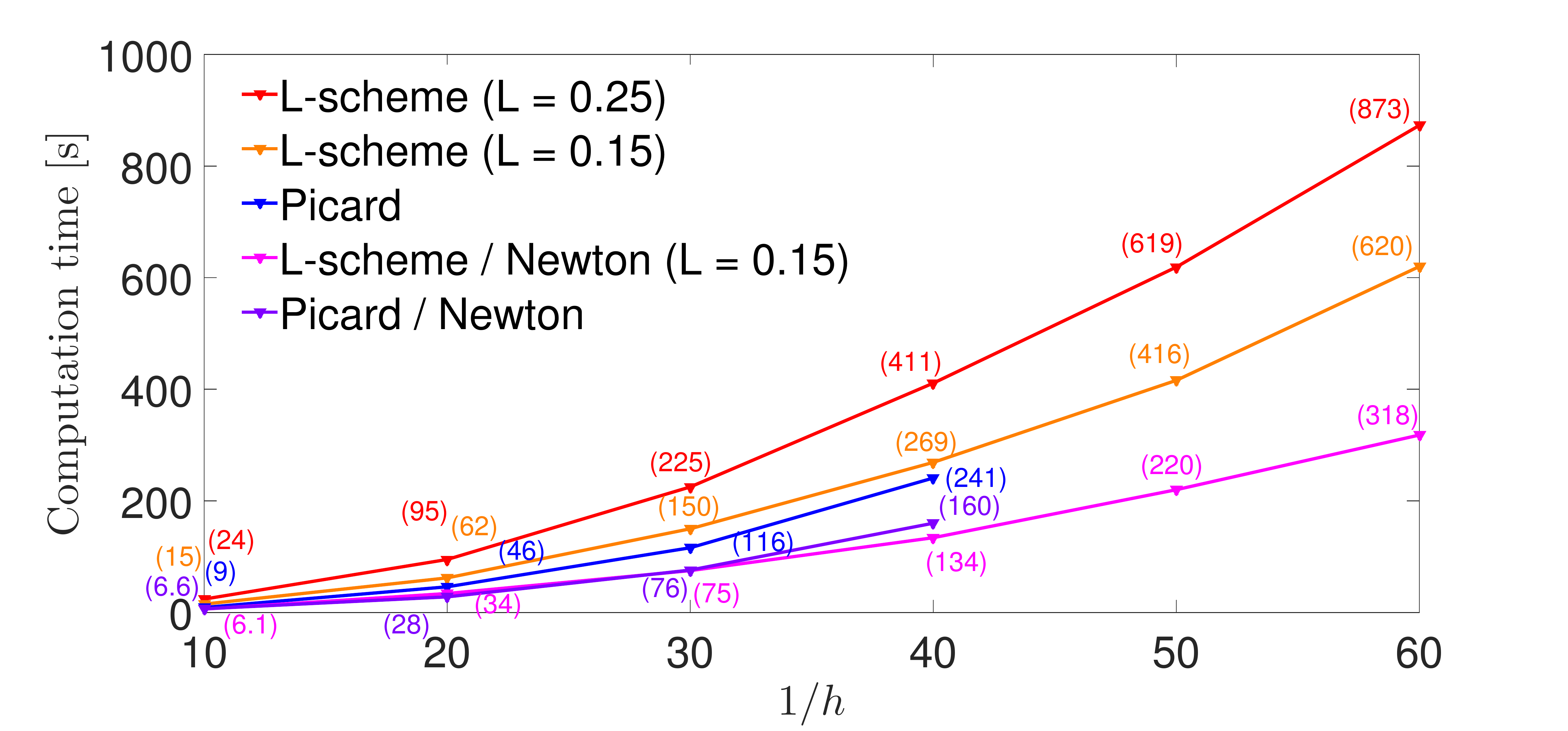}
\caption{Computation times for several mesh sizes,  $\Psi_{\mathrm{vad}} = -3$}
\label{fig:example_1_CPU_psi_vad_3}
\end{center}
\end{figure}

\subsubsection{Condition numbers}
In light of the accuracy of the numerical results, it is interesting to examine the condition numbers of the left-hand side matrices in the system of linear equations for the coefficient vector. Estimations for the condition numbers with respect to $L^1(\Omega)$, denoted by $\|\cdot\|_1$ calculated using the MATLAB function condest() are plotted in Figs. \ref{fig:example_1_cond_psi_vad_2} and \ref{fig:example_1_cond_psi_vad_3} for the non-mixed methods, averaged over all iterations. They did hardly differ from each other at several iteration steps and condition numbers for the mixed methods corresponded approximately to the ones of the respective non-mixed method in each iteration. For both values of $\Psi_{\mathrm{vad}}$, the $L-$scheme with $L = 0.25$ featured the lowest condition numbers, followed by its counterpart with $L = 0.15$. In case of Newton's method being convergent, it exhibited higher condition numbers than the $L-$scheme. In all computations, the condition numbers in the modified Picard scheme were the highest, furthermore, they increased most rapidly when it came to denser meshes.

All methods required more iterations and computation time when the vadose zone was taken to be dryer initially and the arising matrices were worse-conditioned than for the moister setting.

\begin{figure}[h!]
\begin{center}
\includegraphics[scale=.18]{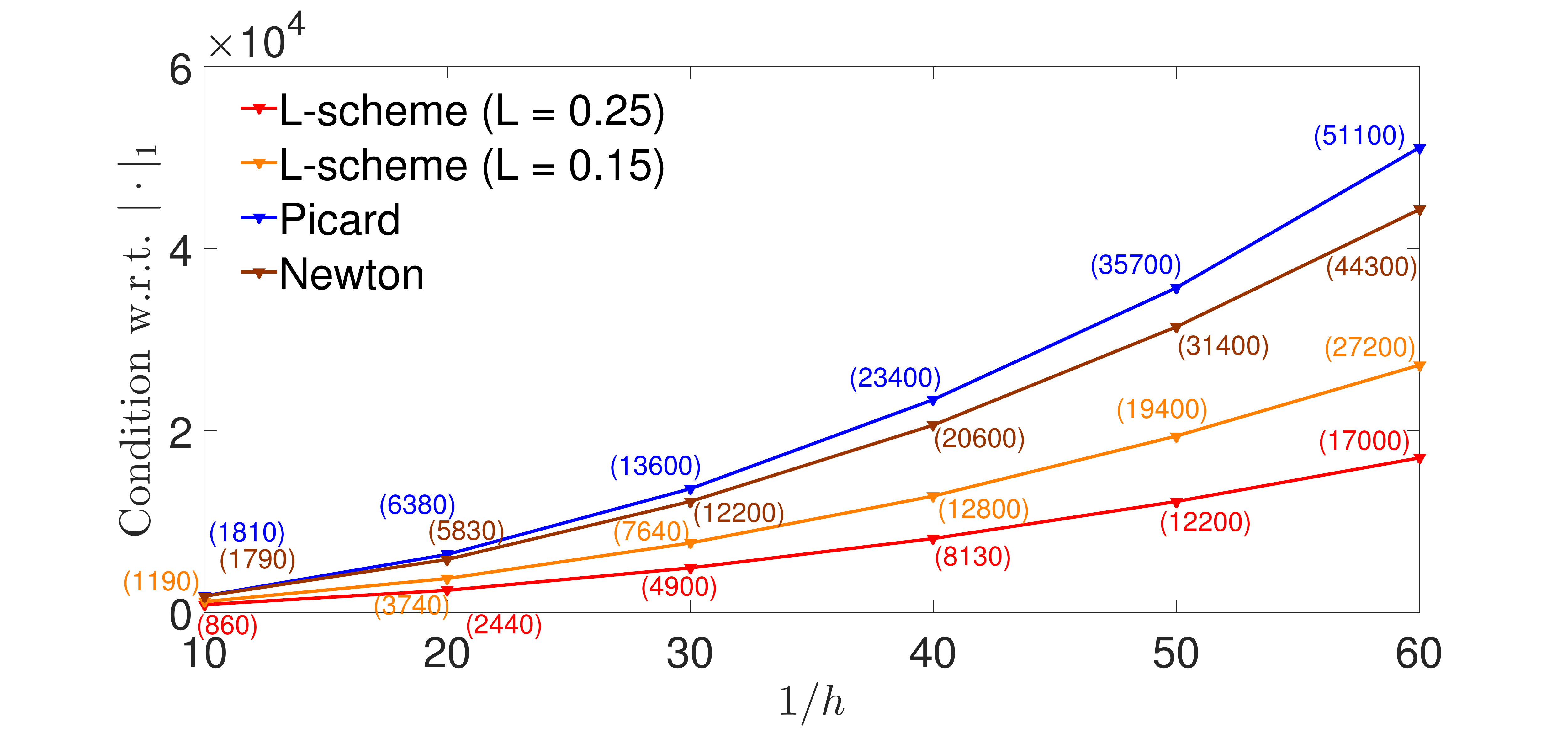}
\caption{Condition numbers for several mesh sizes,  $\Psi_{\mathrm{vad}} = -2$}
\label{fig:example_1_cond_psi_vad_2}
\end{center}
\end{figure}
\begin{figure}[h!]
\begin{center}
\includegraphics[scale=.18]{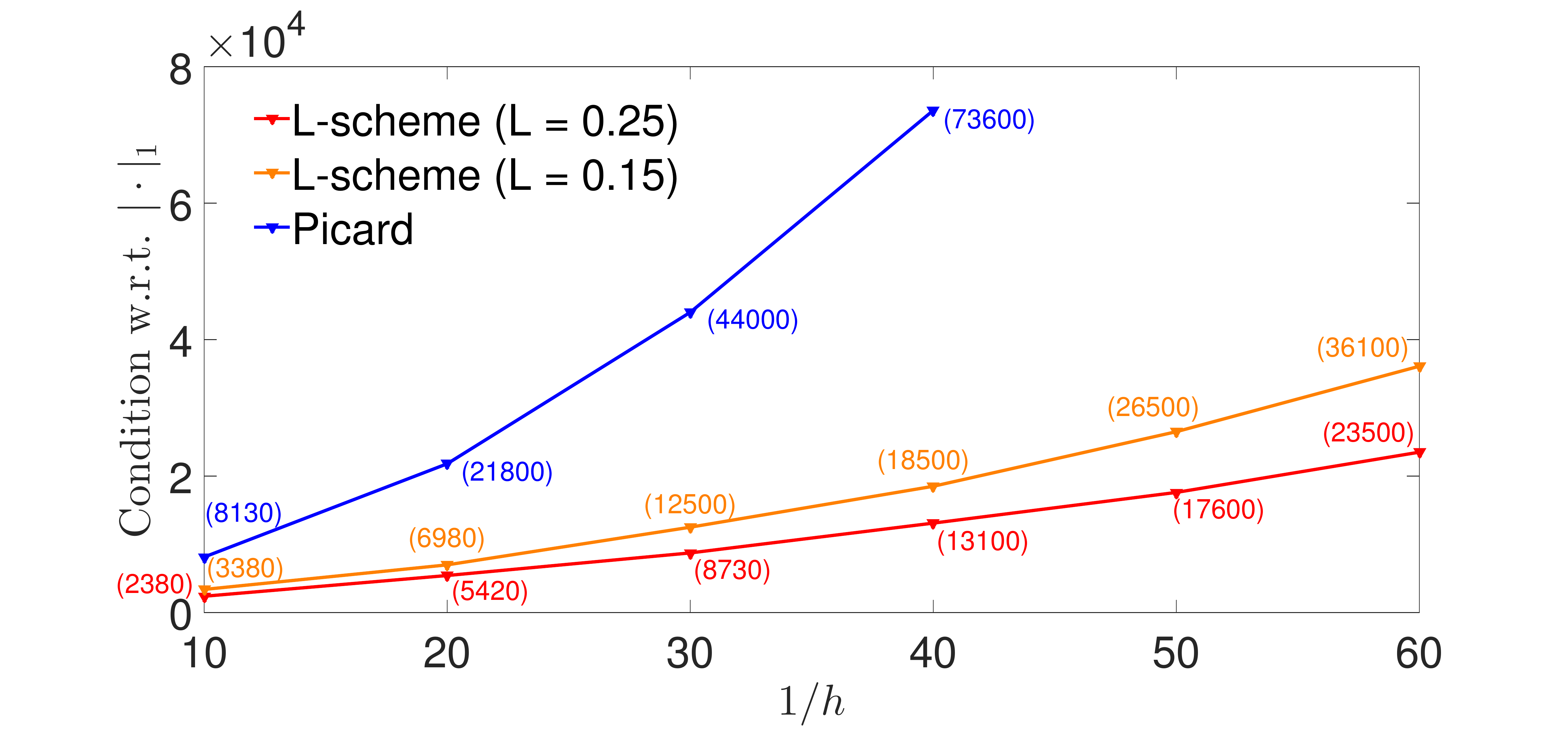}
\caption{Condition numbers for several mesh sizes,  $\Psi_{\mathrm{vad}} = -3$}
\label{fig:example_1_cond_psi_vad_3}
\end{center}
\end{figure}
%% ------------------------------------------------------------------------ %%

\subsection{Example 2 (Benchmark problem)}
\label{sec:example_2}
In order to compare the linearization methods in the numerical simulation of a recognized benchmark problem, we consider an example used by \cite{Haverkamp}, \cite{Knabner} and \cite{Schneid} amongst others. It describes the recharge of a groundwater reservoir from a drainage trench in two spatial dimensions (Fig. \ref{fig:geometry_ex2}). The domain $\Omega \subset \mathbb{R}^2$ represents a vertical section of the subsurface. On the right hand side of $\Omega$, the groundwater table is fixed by a Dirichlet condition for the pressure height for $z \in [0,1]$. The drainage trench is modelled by a transient Dirichlet condition on the upper boundary for $x \in [0,1]$. On the remainder of the boundary $\partial \Omega$, no-flow conditions are imposed. Hence, the left boundary can be construed as symmetry axis of the geometry and the lower boundary as transition to an aquitard. Altogether, the geometry is given by
\begin{equation*}
\begin{aligned}
\Omega &= (0,2) \times (0,3),  \\
\Gamma_{D_1}    &= \{(x,z) \in \partial \Omega \ | \ x \in [0,1] \land z = 3 \}, \\
\Gamma_{D_2}    &= \{(x,z) \in \partial \Omega \ | \ x = 2 \land z \in [0,1] \}, \\
\Gamma_D        &= \Gamma_{D_1} \cup \Gamma_{D_2}, \\
\Gamma_N        &=\partial \Omega \setminus \Gamma_D. \\
\end{aligned}
\end{equation*}
The initial and boundary conditions are taken as
\begin{equation*}
\begin{aligned}
\Psi(x,z,t) &=
\begin{cases} -2+2.2 \ t/\Delta t_D, & \mathrm{on} \ \Gamma_{D_1}, t \leq \Delta t_D, \\ 
0.2, & \mathrm{on} \ \Gamma_{D_1}, t > \Delta t_D,  \\
1-z, & \mathrm{on} \ \Gamma_{D_2}, \end{cases} \\
-K(\Psi(x,z,t))&(\nabla \Psi(x,z,t) + \ez \cdot \vec{n}) = 0 \ \hspace{.2cm} \mathrm{on} \ \Gamma_N, \\
&\Psi^0(x,z) = 1 - z  \ \hspace{.97cm} \mathrm{on} \ \Omega,\\
\end{aligned}
\end{equation*}
in which $\vec{n}$ denotes the outward pointing normal vector. Initially, a hydrostatic equilibrium is thus assumed. The computations are undertaken for two sets of parameters adopted from \cite{Genuchten}, characterising silt loam respectively Beit Netofa clay. For both soil types, the solution is computed over $N = 9$ time levels. The time unit is $1$ day and dimensions are given in meters. The van Genuchten parameters employed as well as the parameter $\Delta t_D$ governing the time evolution of the upper Dirichlet boundary, the time step $\tau$ and the simulation end time $T$ are listed in Table  \ref{table_parameters_ex2}. We used a regular mesh consisting of $651$ nodes. The simulations invoking the $L-$scheme were carried out with $L = \sup_{\Psi} \theta'(\Psi)$ (referred to as $L-$scheme 1) and with $L$ slightly smaller (referred to as $L-$scheme 2) for both soil types, that is $L = 4.501 \cdot 10^{-2}$ and $L = 3.500 \cdot 10^{-2}$ for the silt loam soil and $L = 7.4546 \cdot 10^{-3}$ and $L = 6.500 \cdot 10^{-3}$ for the clay soil. The mixed methods switched to Newton's method when condition \eqref{eq:criterion_switch} held true for $\delta_a = 0.2$ and $\delta_r = 0$. 

% Example 2: van Genuchten parameters and time parameters
\begin{figure}[h!]
\begin{center}
\includegraphics[scale=0.5]{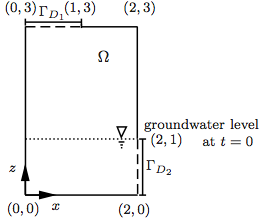}
%\mbox{  }
\end{center}
\caption{Geometry for Example 2}
\label{fig:geometry_ex2}
\end{figure}

\begin{table}[h!]
\begin{center}
\begin{tabularx}{.38\textwidth}{||l|r|r||} 
\toprule
& Silt loam & Beit Netofa clay \\
\toprule
\multicolumn{3}{l}{\emph{Van Genuchten parameters:}}\\
\midrule
$\theta_S$                                               & $0.396$ &  $0.446$\\
$\theta_R$                                               & $0.131$ & $0.0$\\
$\alpha$                                                 & $0.423$ & $0.152$\\
$n$                                                      & $2.06$ & $1.17$\\
$K_S$                                                    & $4.96 \cdot 10^{-2}$ & $8.2 \cdot 10^{-4}$\\
\midrule
\multicolumn{3}{l}{\emph{Time parameters:}}\\
\midrule
$\Delta t_D$ 		  				           & $1/16$ & $1$\\
$\Delta t$ 		  				               & $1/48$ & $1/3$\\
$T$                                            & $3/16$ & $3$\\
\bottomrule
\end{tabularx}
\caption{Simulation parameters for Example 2}
\label{table_parameters_ex2}
\end{center}
\end{table}

All the considered linearization methods converged for both soil types. The pressure profiles computed with mixed $L-$scheme 2/Newton at time $T$ are presented in Fig. \ref{fig:solution_plot_silt} and are as expected for this benchmark problem. Table \ref{tab:comparison} shows the total numbers of iterations, the computation times and the average of the estimated condition numbers of the left-hand side matrices with respect to $\|\cdot\|_1$, in case of mixed methods split up in the two involved schemes. In what follows, the foregoing numerical indicators, i.e. the number of iterations, the computational time and the condition numbers are to be discussed in detail.

\begin{figure}[h!]
\hspace*{-2cm}
\includegraphics[scale=.2]{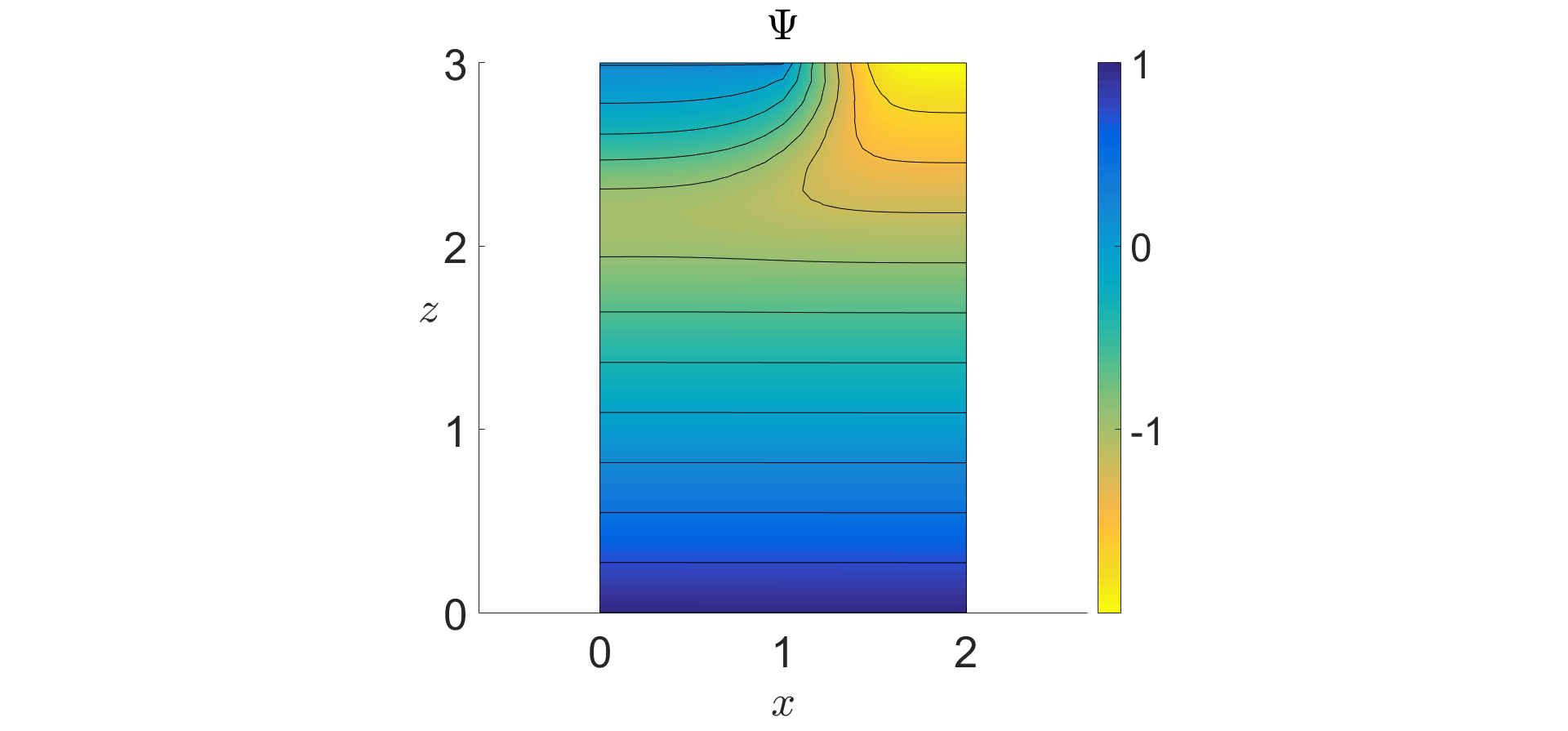}\mbox{ }\hspace*{-2.6cm} 
\includegraphics[scale=.2]{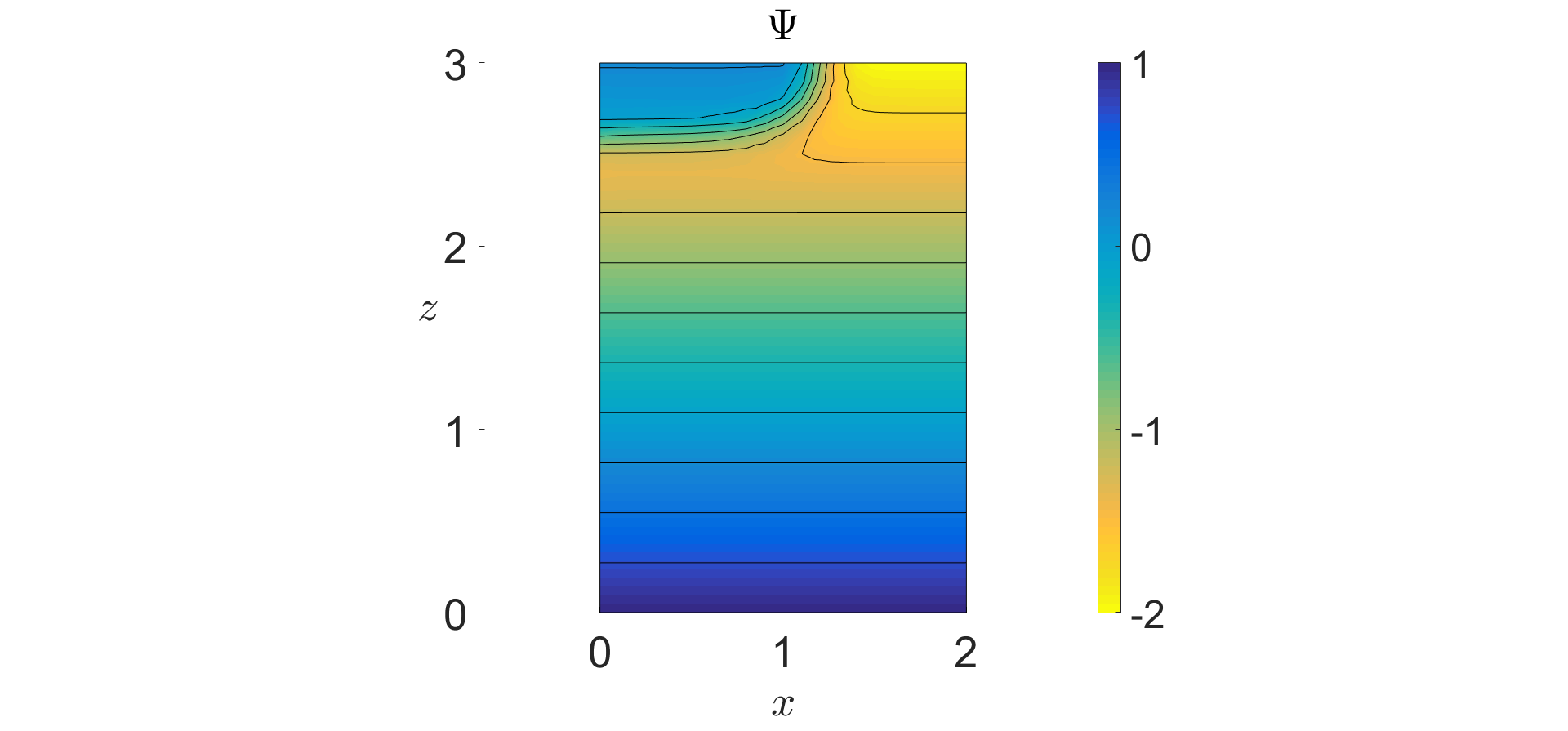}
\caption{Pressure profiles after $4.5 \ [h]$ for silt loam (left) and $3 \ [d]$ for Beit Netofa clay (right)}
\label{fig:solution_plot_silt}
\end{figure}

\subsubsection{Numbers of iterations}
As to the non-mixed methods, it is not surprising that more complex methods yielded smaller numbers of iterations, i.e. Newton's method converged after the fewest iterations, followed by the modified Picard scheme. $L-$scheme 2 had the edge over $L-$scheme 1, but still needed some more iterations than the modified Picard scheme for both soil types. The numbers of iterations of the mixed methods exhibit a salient result: The advantage of the modified Picard scheme over the $L-$scheme with regard to the number of iterations vanished when coupling the schemes to Newton's method and the mixed $L-$scheme 2/Newton required less iterations than the mixed Picard/Newton scheme. This suggests that the $L-$scheme stands out due to a rapid approach towards the solution in the first iteration steps. Among all methods, Newton's method provided convergence after the least number of iterations for both van Genuchten parametrizations.

\subsubsection{Computation times}
When it comes to the comparison of computation times, it is striking that the performances of the methods substantially varied between the simulations for silt loam and Beit Netofa clay. While Newton's method featured the shortest computation time among the non-mixed methods in case of silt loam owing to the low number of required iterations, computation in case of the clayey soil took long using Newton's method as compared to the $L-$scheme. In the silt loam simulation, computation times of the $L-$scheme were clearly greater than the ones of Newton's method, but switching to Newton's method vastly improved the computation time so that the $L-$scheme 2/Newton turned out to be the fastest method. In contrast, the computation times for the clay soil demonstrate that in some cases, switching to Newton's method may even be disadvantageous. Although the mixed $L-$scheme/Newton converged in fewer iteration steps than the non-mixed ones, changing to Newton's method provoked a deterioration of the computation time. This might indicate that the $L-$scheme be less susceptible to parametrizations of the hydraulic relationships lacking of regularity than the modified Picard scheme and Newton's method since the hydraulic conductivity for the parametrization of the Beit Netofa clay is not Lipschitz continuous. The modified Picard scheme was found to be the slowest method for the silt loam soil, the computation time for Beit Netofa clay was barely less than the one related to Newton's method.

\subsubsection{Condition numbers}
In view of the condition numbers of the left-hand side matrices, the $L-$scheme excelled for both soil types: The condition numbers with either value of $L$ were remarkably lower than the ones arising when Newton's method or the modified Picard scheme were employed, to be more specific by a factor of minimum 11 for the silty soil and still by a factor of minimum 5 for the clayey soil. Apparently, incorporation of the derivative of the water content entailed a considerable deterioration of the condition. The virtual equality of the condition numbers for the modified Picard scheme and Newton's method was probably due to the proximity of the solution to a hydrostatic equilibrium which caused the only term distinguishing Newton's method from the modified Picard scheme in equation \eqref{eq:Newton} to be small because of $\nabla \Psi_h^{n} \approx -\ez$.

\begin{table}[h!]
\begin{center}
\medskip\noindent
\begin{tabularx}{0.53\textwidth}{||l|c|c||}
\toprule
& Silt loam & Beit Netofa clay \\
\toprule
\multicolumn{3}{l}{\emph{Total number of iterations:}}\\
\midrule
$L-$scheme 1 & 74 & 74 \\
$L-$scheme 2 & 65 & 72 \\
Picard & 58 & 69 \\
Newton & 31 & 48 \\
$L-$scheme 1 / Newton & 46 (26/20) & 54 (28/26) \\
$L-$scheme 2 / Newton & 40 (22/18) & 54 (28/26) \\
Picard / Newton & 43 (25/18) & 55 (29/26) \\
\midrule
\multicolumn{3}{l}{\emph{Total computation time $[s]$:}}\\
\midrule
$L-$scheme 1 & 231 & 237 \\
$L-$scheme 2 & 210 & 225 \\
Picard & 234 & 285 \\
Newton & 184 & 289 \\
$L-$scheme 1 / Newton & 200 & 247 \\
$L-$scheme 2 / Newton & 180 & 243 \\
Picard / Newton & 213 & 278 \\
\midrule
\multicolumn{3}{l}{\emph{Averaged condition number $[10^3]$}:}\\
\midrule
$L-$scheme 1 & 6.84 & 51.2 \\
$L-$scheme 2 & 7.86 & 56.0 \\
Picard & 90.1 & 321 \\
Newton & 90.1 & 321 \\
$L-$scheme 1 / Newton & 6.84/90.1 & 51.2/321 \\
$L-$scheme 2 / Newton & 7.86/90.1 & 56.0/321 \\
Picard / Newton & 9.01/90.1 & 321/321 \\
\bottomrule
\end{tabularx}
\caption{Comparison of the linearization methods for Example 2}
\label{tab:comparison}
\end{center}
\end{table}

%\begin{figure}[h!]
%\includegraphics[scale=1]{}
%\includegraphics[scale=1]{Schneid_scheme.eps}
%\caption{Geometry of the model for the example 2}
%\label{fig:Schneid_scheme}
%\end{figure}

\section{Conclusions}\label{sec:conclusions}

In this paper we considered linearization methods for the Richards' equation. The methods were comparatively studied w.r.t. convergence, computational time and condition number of the resulting linear systems. The analysis was done in connection with Galerkin finite elements, but the schemes can be applied to any other discretization method as well, and similar results are expected. We focused on the Newton method, the modified Picard method, the Picard/Newton and the $L-$scheme. We proposed also a new mixed scheme, the $L-$scheme/Newton which seems to perform best. We conducted a theoretical analysis for the $L-$scheme for Richards' equation, showing that it is robust and linearly convergent. We also discussed the convergence of the modified Picard and Newton methods. %According to our analysis the modified Picard should perform only quantitatively, but not qualitatively better than the Newton method. 

The $L-$scheme is very easy to be implemented, does not involve the computation of any derivatives and the resulting linear systems are much better conditioned as the modified Picard or Newton methods. Although it is only linearly convergent, seems to be not much slower than the Newton (or Picard/Newton) method, and in some cases even faster. The $L-$scheme is the only robust one, a result which can be shown theoretically and it is supported by the numerical findings. Only a relatively mild constraint on the time step length is required. Furthermore, when the hydraulic conductivity $K$ is a constant, there is no restriction in the time step size. In this case the only condition necessary for the global convergence of the $L-$method is $L \ge \dfrac{L_\theta}{2}$.

We proposed a new mixed scheme, the $L-$scheme/Newton which is more robust than Newton but still quadratically convergent. This new mixed method performed best from all the considered methods with respect to computational time. Even in cases when Newton converges, the $L-$scheme/Newton seems to be worth, being faster for the examples considered.

The present study is based on two illustrative numerical examples, with realistic parameters. The examples are two dimensional. One of the examples is a known benchmark problem. The numerical findings are sustaining the theoretical analysis.\\

{\bf Acknowledgements}. Thank the DFG-NRC NUPUS for support and our colleagues J.M. Nordbotten, I.S. Pop and K. Kumar for very helpful discussions.

\end{document}